\newcommand{\set}[1]{\ensuremath{\left\{#1\right\}}}
\newcommand{\abs}[1]{\ensuremath{|#1|}}
\newcommand{\rand}{\partial}
\newcommand{\diam}{\textnormal{diam}}
\theoremstyle{definition}
\newtheorem{Def}{Definition}[section]
\newtheorem{Exam}[Def]{Example}
\newtheorem{Rem}[Def]{Remark}
\theoremstyle{plain}
\newtheorem{Lem}[Def]{Lemma}
\newtheorem{Cor}[Def]{Corollary}
\newtheorem{Prop}[Def]{Proposition}
\newtheorem{Tm}[Def]{Theorem}
\newtheorem{Claim}[Def]{Claim}
\newcommand{\nat}{{\mathbb N}}
\newcommand{\real}{{\mathbb R}}
\newcommand{\ganz}{{\mathbb Z}}
\newcommand{\BF}{\ensuremath{\mathcal B}}
\newcommand{\UF}{\ensuremath{\mathcal U}}
\newcommand{\VF}{\ensuremath{\mathcal V}}
\newcommand{\YF}{\ensuremath{\mathcal Y}}
\newcommand{\ZF}{\ensuremath{\mathcal Z}}
\begin{document}

\title{Spanning trees in hyperbolic graphs}
\author{Matthias Hamann\\\\Fachbereich Mathematik\\Universit\"at Hamburg}
\date{}
\maketitle

\begin{abstract}
We construct spanning trees in locally finite hyperbolic graphs that represent their hyperbolic compactification in a good way: so that the tree has at least one but a bounded number of disjoint rays to each boundary point. The bound depends only on the (Assouad) dimension of the boundary. As a corollary we extend a result of Gromov which says that from every hyperbolic graph with bounded degrees one can construct a tree (disjoint from the graph) with a continuous surjection from the ends of the tree onto the hyperbolic boundary such that the surjection is finite-to-one. We shall construct a tree with these properties as a subgraph of the hyperbolic graph, which in addition is also a spanning tree of that graph.
\end{abstract}

\section{Introduction}

A spanning tree of a graph is called {\em end-faithful} if the tree contains exactly one ray from each end, starting at the root.
Halin \cite{halin64} proved that every countable graph has an end-faithful spanning tree.
Examples for such trees are the normal spanning trees (see \cite{BrochetDiestel,Wurzelbaeume} and \cite[Chapter~8]{GelbesBuch}).
So it is a natural question to ask~-- if we replace the end-compactification of a graph by other compactifications that refine the end-compactification~-- how we can expect a spanning tree to behave with respect to the new compactification:
Is it possible that the ends of a spanning tree represent the boundary points of this compactification in a one-to-one correspondence?

In this paper we study such a generalization of end-faithful spanning trees to spanning trees in locally finite hyperbolic graphs, replacing the end-compacti\-fi\-cation by the hyperbolic compactification.

A {\em hyperbolic graph} $G$ is a connected graph for which there exists a $\delta$ such that for every three vertices every {\em geodesic} between two of them, that is, every path representing the distance between them, is contained in a $\delta$-neighbourhood of the union of every two geodesics between each other two of those vertices.
A {\em hyperbolic boundary point} is an equivalence class of the following equivalence relation (compare with \cite[(22.12)]{woessBook}) of geodesic rays:
two geodesic rays $x_0x_1\ldots$ and $y_0y_1\ldots$ are {\em equivalent} if $\liminf_{i\to\infty}d(x_i,y_i)$ is finite.
The {\em hyperbolic boundary} $\rand G$ is the set of all hyperbolic boundary points.
This is one of many equivalent definitions of the hyperbolic boundary (see \cite{GhHaSur,HLV-DirichletProblem,woessBook} and Section~\ref{hypGraphs} of this paper).
Let $\widehat{G}:=G\cup\rand G$.

In~\cite[Section~7]{BDS-Embedding} (see also \cite{BourdonPajot,Elek-Cohomology} or \cite[Chapter~6]{BS-Elements}) for every compact metric space $X$, a construction of a locally finite hyperbolic graph $G$ is given such that $\rand G$ is homeomorphic to~$X$.
But as the end space of any tree is a totally disconnected topological space (see \cite{jung71}) there cannot be any spanning tree such that the induced map from the boundary of the tree to the boundary of the graph is a homeomorphism.
Hence for a given locally finite hyperbolic graph $G$ there is not always a tree $T$ such that the end space of~$T$ is homeomorphic to the hyperbolic boundary of~$G$.
We shall give an explicit example for such a situation in Section~\ref{SpannTreesSection} (Example~\ref{ex_introEx}).
But whenever the identity of a hyperbolic graph $G$ extends to a homeomorphism from $\widehat{G}$ to $G$ with its ends, any normal spanning tree~-- or more generally any end-faithful spanning tree~-- is faithful with respect to the hyperbolic boundary points.

Hyperbolic graphs in which the notion of hyperbolic boundary points and ends coincide are for example all locally finite graphs quasi-isometric to a tree (see \cite{KM-QuasiIsometries}) or~-- more generally, compare with \cite[Theorem~2.8]{KM-QuasiIsometries}~-- graphs in which any end is a thin end in the sense of~\cite[Chapter~8]{GelbesBuch}, as any end of a locally finite hyperbolic graph that consists of more than one hyperbolic boundary point consists of uncountably many hyperbolic boundary points since this set of hyperbolic boundary points is a connected set \cite[Proposition~7.5.17]{GhHaSur}.
Thus, in locally finite hyperbolic graphs, the hyperbolic boundary is a refinement of the set of its ends and it is furthermore a compact metric space \cite[Proposition~7.2.9]{GhHaSur}.
This is not the case for arbitrary graphs: neither the hyperbolic boundary has to be compact for hyperbolic graphs that are not locally finite nor it is a refinement of the set of ends of such graphs.
Because of this, we restrict our point of view to locally finite graphs.

Instead of spanning trees that are faithful to boundary points, we may perhaps hope that we get spanning trees that have only a finite number of distinct paths from the root to each boundary point such that the set of these numbers is bounded.
For hyperbolic Cayley graphs, this is known to be true (cp.\ \cite[p.\ 10]{KB-BoundaryHypGroup}):

\begin{Tm}\label{GromovAsCor}
Let $\Gamma$ be a locally finite hyperbolic Cayley graph. Then there exists a rooted geodesic spanning tree $T$ of~$\Gamma$ such that its embedding extends uniquely to a continuous bounded-to-one map from $\widehat{T}$ to $\widehat{\Gamma}$.\qed
\end{Tm}

\emph{Geodesic spanning trees} are spanning trees that preserve for each vertex the distance to the root from the distance-metric of the graph.
The general idea of the proof of Theorem~\ref{GromovAsCor} is the following.
Take an order on a finite set $S$ of generators, and take the tree consisting of edges that lie on shortest words with respect to the following order: we set $w < u$ if either $|w| < |u|$ or if $|w| = |u|$ and $w$ is lexicographically smaller than~$u$.
The resulting tree has the claimed properties.

Unfortunately, we cannot take such a geodesic spanning tree in general to obtain a continuous bounded-to-one map from $\widehat{T}$ to~$\widehat{G}$: in Example~\ref{SecondExample}, we shall discuss a locally finite hyperbolic graph with precisely one hyperbolic boundary point such that each of its rooted geodesic spanning trees has infinitely many ends.
However, we shall obtain a lower bound on the maximum number of tree ends mapping to a common hyperbolic boundary point in Section~\ref{GeodSpannTreesSection}.
This bound depends only on the topological dimension of the hyperbolic boundary:

\begin{Tm}\label{TheoremGeodTree}
Let $G$ be a locally finite hyperbolic graph whose boundary has topological dimension $n\in\nat$. Then for every rooted geodesic spanning tree $T$ of~$G$ there is a boundary point $\eta\in\rand G$ with at least $n+1$ distinct rays starting at the root and converging to~$\eta$.
\end{Tm}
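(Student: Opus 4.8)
The plan is to read $(*)$ as a statement about the fibres of the canonical map $\phi=\iota|_{\rand T}\colon\rand T\to\rand G$. Since $T$ is a locally finite tree, its end space $\rand T$ is compact and zero-dimensional: the shadows $S_v=\set{\omega\in\rand T : \omega \text{ passes through } v}$ are clopen, and for each $\ell$ the family $\set{S_v : d(o,v)=\ell}$ (with $o$ the root) partitions $\rand T$. As $\phi$ is continuous with compact domain and Hausdorff target it is closed, and for a geodetic spanning tree it is onto: given $\eta\in\rand G$, the vertices of a $G$-geodesic ray towards $\eta$ are joined to $o$ by $T$-geodesics, a subsequence of which converges (K\"onig) to a ray of $T$ that, by stability of geodesics in a hyperbolic graph, again converges to $\eta$. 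Because in a tree the rays from $o$ are exactly the ends, the number of rays from $o$ converging to $\eta$ equals $\abs{\phi\inv(\eta)}$, so $(*)$ is the assertion that some fibre has at least $\tfrac{n+1}{2}$ points.

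The core step is to bound $\dim\rand G$ by the largest fibre size. Assume for contradiction that $\abs{\phi\inv(\eta)}\le m$ for all $\eta$, and push the level-$\ell$ partition forward: the closed sets $C_v:=\phi(S_v)$ with $d(o,v)=\ell$ cover $\rand G$. Their mesh tends to $0$ as $\ell\to\infty$, because the shadows shrink and $\phi$ is uniformly continuous; and no point lies in more than $m$ of them, since $\eta\in C_v$ means some ray of $\phi\inv(\eta)$ runs through $v$ and the at most $m$ rays of $\phi\inv(\eta)$ occupy at most $m$ distinct vertices at level $\ell$. Thus for every $\varepsilon>0$ there is a finite closed cover of $\rand G$ of mesh $<\varepsilon$ with multiplicity at most $m$.

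It remains to turn these closed covers into open ones of the same multiplicity. I would apply the swelling lemma to replace $\set{C_v}$ by an open cover with the same nerve, hence multiplicity $\le m$; keeping the mesh below the Lebesgue number of any prescribed cover then shows, via the usual refinement argument, that $\dim\rand G\le m-1$. Hence $\dim\rand G=n$ forces $m\ge n+1$, which is in fact stronger than $(*)$; the factor $\tfrac12$ in the statement should be an artefact of a less efficient passage from closed to open covers, which the swelling lemma avoids.

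The step I expect to be most delicate is exactly this closed-to-open conversion with multiplicity under control: a naive open thickening of the $C_v$ can merge several sets at a point and inflate the multiplicity, and it is in quantifying this loss that one is led to $\tfrac{n+1}{2}$ rather than the sharp $n+1$. A secondary point requiring care is surjectivity of $\phi$ together with the fact that the finitely many rays of a single fibre eventually leave every common vertex, so that the multiplicity of the pushed-forward cover is indeed controlled by the fibre size.
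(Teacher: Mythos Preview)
Your argument is correct and in fact yields the sharper bound $m\ge n+1$ rather than $m\ge\tfrac{n+1}{2}$; your suspicion that the factor $\tfrac12$ is an artefact is right. The paper proceeds differently. Instead of the level-$\ell$ cut, it takes an arbitrary finite separator $S$ (Proposition~\ref{TwoStar}) chosen so that the limit set $Z$ of each infinite component of $T-S$ lies inside a single member of a prescribed critical cover $\UF$; it then shows (Proposition~\ref{EpsilonCircles}) that some $\varepsilon>0$ makes every $\varepsilon$-ball meet at most $m$ of the $Z$'s, and replaces each $Z$ by its open $\varepsilon$-thickening intersected with the containing $U\in\UF$. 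This produces an open refinement of $\UF$, but the paper bounds its multiplicity only by $2m$, whence $2m\ge n+1$. Your route, pushing forward the clopen level partition of $\rand T$ and then invoking the swelling lemma to pass to an open cover with the \emph{same} nerve, avoids this loss entirely. The one detail you leave implicit is that in a compact metric space the swelling can be realised as a uniform $\epsilon$-thickening with $\epsilon$ chosen (by a finite compactness argument over the empty $(m{+}1)$-fold intersections of the $C_v$) small enough to preserve the nerve \emph{and} to keep the mesh below the Lebesgue number of $\UF$; once that is said, your argument is complete. What the paper's approach buys is a set of auxiliary statements (Propositions~\ref{geodTreeClosed}--\ref{EpsilonCircles}) phrased for arbitrary finite separators rather than level sets; what yours buys is brevity and the optimal constant.
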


If we allow arbitrary trees, then it is possible to obtain a positive result as soon as the hyperbolic graph has bounded degree.
Gromov \cite[\S 7.6]{gromov} states the following theorem:

\begin{Tm}\label{GromovTm}
Let $X$ be a $\delta$-hyperbolic graph with maximum degree ${N<\infty}$. Then there is a locally finite tree $T(X)$ with maximum degree at most\linebreak $\exp(\exp((\delta+1)N))$ with a continuous surjection $\rand T\to\rand X$ that is finite-to-one; additionally a boundary point of $X$ has at most $\exp(\exp(\exp((\delta+1)N)))$ preimages.\qed
\end{Tm}

Gromov constructed the tree $T(X)$ independently of the local structure of the graph $X$, just depending on the metric of $X$.
Thus a vertex in $T(X)$ may have higher degree than all vertices in $X$.

Coornaert and Papadopoulos \cite[Chapter~5]{SymbolicDynamics} worked out several ideas of Gromov \cite[Sections~7.6, 8.5.B, and 8.5.C]{gromov}~-- one of which is Theorem~\ref{GromovTm}~-- and thereby constructed similar abstract trees.
They constructed three different abstract trees, two of which also have a continuous finite-to-one surjection from the ends of the tree to the hyperbolic boundary of the graph.

We shall prove an extension of Gromov's theorem in Section~\ref{SpannTreesSection}.
The tree we shall construct will be a spanning tree of the hyperbolic graph:

\begin{Tm}\label{SpanningTrees}
Let $G$ be a locally finite hyperbolic graph whose boundary $\rand G$ has finite Assouad dimension. Then there exists an $n\in\nat$, depending only on the dimension, and a rooted spanning tree $T$ of $G$, with the following properties:
\begin{enumerate}[(i)]
\item Every ray in~$T$ converges to some point in the boundary of~$G$;
\item for every boundary point $\eta$ of~$G$ there is a ray in~$T$ converging to~$\eta$;
\item for every boundary point $\eta$ of~$G$ there are at most $n$ distinct rays in~$T$ that start at the root of~$T$ and converge to~$\eta$.
\end{enumerate}
\end{Tm}

Examples of locally finite hyperbolic graphs whose hyperbolic boundary has finite Assouad dimension are all graphs with bounded degree (see \cite[Theorem~9.2]{BonkSchramm_Embeddings}), so in particular all Cayley graphs of finitely generated groups with respect to a finite set of generators.
Thus, we obtain the important part of Theorem~\ref{GromovTm} as a corollary of Theorem~\ref{SpanningTrees}.

In contrast to end-faithful spanning trees where the normal trees of graphs form a class of graphs that are all end-faithful, there is no generic class of spanning trees known that always fulfill the conclusions of Theorem~\ref{SpanningTrees}.

A direct consequence of~\cite[Theorem 1.5]{BenjaminiSchramm-Cheeger} is the following theorem:

\begin{Tm}
Let $\Gamma$ be a non-elementary Gromov hyperbolic group, $S$ a finite generating set of $\Gamma$, and $G$ the corresponding Cayley graph.
Then $G$ contains a bilipschitz image of the binary tree $T_2$, in particular such that the induced mapping $\rand T_2\to\rand G$ is a homeomorphic embedding of $\rand T_2$ into $\rand G$.\qed
\end{Tm}

So in any connected locally finite hyperbolic Cayley graph (which has exponential growth, cp.~\cite[6.14]{B-ACourse}), there is a bilipschitz embedding of a tree of exponential growth.
Hence, this theorem has its main interest in the graph itself whereas Theorem~\ref{SpanningTrees} has its main interest in the hyperbolic boundary.
Because of this, it is a natural question to ask whether for every connected locally finite hyperbolic graph $G$, there is a tree $T$ that represents both $G$ and $\rand G$ in a suitable way.

In Section~\ref{Topology} we shall give explicit definitions of the two dimensional concepts we use, the Assouad dimension and the topological dimension, and state some of their properties. 
For a more detailled introduction to the Assouad dimension we refer to Luukkainen \cite[Appendix~A]{Luukainen}.

\section{Hyperbolic graphs}\label{hypGraphs}

Let us give a brief introduction in hyperbolic graphs. For more details we refer to \cite{ABCFLMSS,CoornDelPapa,GhHaSur,gromov} and \cite[Chapter~22]{woessBook}.

Let $G=(V,E)$ be a hyperbolic graph and let $\delta$ be a constant that realizes the hyperbolicity condition from the introduction.
We are investigating $G$ from a topological point of view so that every edge of~$G$ is understood as a homeomorphic image of the real interval $[0,1]$.

Let $o$ be a vertex in $G$.
The {\em Gromov-product} (with respect to~$o$) of two vertices $x$ and $y$ is
\[(x,y)_o:=\frac{1}{2}(d(x,o)+d(y,o)-d(x,y)).\]
If it is obvious by the context that we use $o$ as the base-point for the product, we simply write $(x,y)$.

A {\em ray} is a one-way infinite path and a {\em double ray} is a two-way infinite path.
Two rays are {\em equivalent} if no finite set of vertices separates them.
This is an equivalence relation and an {\em end} is an equivalence class of rays.
For more informations on ends of graphs we refer to~\cite{GelbesBuch,halin64,Woess-Amenable}.
A ray $x_0x_1x_2\ldots$ is {\em geodesic} if $d(x_i,x_j)=\abs{i-j}$ for all $i,j\in\nat$, and a double ray $\ldots x_{-1}x_0x_1\ldots$ is {\em geodesic} if $d(x_i,x_j)=\abs{i-j}$ for all $i,j\in\ganz$.

We are giving~-- in addition to the definition of the introduction~-- a second definition, a topological one, of the hyperbolic boundary:
a sequence $(x_i)_{i\geq 0}$ {\em converges} to infinity if $\lim_{i,j\to\infty}(x_i,x_j)=\infty$.
Two sequences $(x_i)_{i\geq 0}$, $(y_j)_{j\geq 0}$ are {\em equivalent} if $\lim_{i,j\to\infty}(x_i,y_j)=\infty$.
In hyperbolic graphs this equivalence is indeed an equivalence relation that is independent from the base point $o$ of the Gromov-product.
The hyperbolic boundary can be defined as equivalence classes of this equivalence relation.
A sequence $(x_i)_{i\geq 0}$ {\em converges} to a boundary point if it is in its equivalence class.

A third way to define the boundary is by defining a metric $d_\varepsilon$ on $G$ and then defining $\widehat{G}$ as the completion of $G$ induced by $d_\varepsilon$. Let $\varepsilon>0$ with $\varepsilon':=\exp(\varepsilon\delta)-1<\sqrt{2}-1$. Let
\[\varrho_{\varepsilon}(x,y):=\exp(-\varepsilon(x,y)),\]
\[\varrho_\varepsilon(x_0,\ldots,x_n):=\sum_{i=1}^n\varrho_\varepsilon(x_{i-1},x_i)\]
and
\[d_\varepsilon(x,y):=\inf\set{\varrho_\varepsilon(c)\mid c\text{ chain between }x\text{ and }y}.\]
It is easy to check that $d_\varepsilon$ is a metric on $G$.
In~\cite{GhHaSur} the equivalence of all three definitions of the hyperbolic boundary, which we mentioned here, is shown.

We shall now define a topology on $\widehat{G}$, which is compatible with the topology of $\widehat{G}$ which is induced by $d_\varepsilon$.
For two vertices or hyperbolic boundary points $a$ and $b$ we define the {\em Gromov-product} (once more):
\[(a,b):=\sup\liminf\limits_{i,j\to\infty}(x_i,y_j)\]
where the supremum is taken over all sequences $(x_i)_{i\geq 0}\to a$ and $(y_i)_{i\geq 0}\to b$.
Obviously, it coincides for vertices with the previous definition of the Gromov-product, so we are allowed to use the same symbol.

The following proposition is proved for the locally finite case in \cite[Proposition 4.8]{ABCFLMSS}. For arbitrary hyperbolic graphs compare also with \cite[II.8.5]{BridsonHaefliger}.

\begin{Prop}\label{prop_BasisTop}
Let $G$ be a hyperbolic graph.
The balls $B_r(x)$ for all $x\in VG$ and all $r\in\real_{\geq 0}$ and the sets $N_k(x):=\{y\in\widehat{G}|(x,y)>k\}$ for all $x\in\rand G$ and all $k\in\real_{\geq 0}$ form a basis of a topology on $\widehat{G}$.\qed
\end{Prop}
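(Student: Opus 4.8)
The plan is to verify the two defining properties of a basis directly: that the listed sets cover $\widehat{G}$, and that whenever a point lies in the intersection of two of them, a third listed set contains that point and is contained in the intersection. The covering property is immediate. A vertex $x$ lies in $B_r(x)$ for any $r>0$. A boundary point $\eta\in\rand G$ lies in every $N_k(\eta)$: representing $\eta$ by a single geodetic ray $(x_i)$ based at $o$ gives $(x_i,x_j)=\min\{i,j\}$, so $\liminf_{i,j}(x_i,x_j)=\infty$ and hence $(\eta,\eta)=\infty>k$ for all $k$.

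For the refinement property I would split into cases according to the types of the two sets and of the common point $z$. If both sets are balls $B_{r_1}(x_1),B_{r_2}(x_2)$, the ordinary triangle inequality for the graph metric places $B_s(z)$ with $s=\min_i\{r_i-d(z,x_i)\}$ inside the intersection. More generally, whenever $z$ is a \emph{vertex}, I would use the constant-free estimate $(\eta,w)\ge(\eta,z)-d(z,w)$, valid for vertices $w,z$; this follows from the $1$-Lipschitz dependence of the Gromov product on a vertex argument, carried through the supremum in the definition of $(\eta,\cdot)$, so no additive error appears. Choosing $s$ smaller than each slack $(\eta_i,z)-k_i>0$ (and than any ball constraints) then forces $B_s(z)$ into each of the sets involved, which is routine since $B_s(z)$ contains only vertices.

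The remaining, and hardest, case is when $z=\xi$ is a boundary point lying in $N_{k_1}(\eta_1)\cap N_{k_2}(\eta_2)$; here the refining set must itself contain a boundary point, hence must be some $N_k(\xi)$. The natural attempt is to extend Proposition~\ref{GromovProdBaseIdependet} to $\widehat{G}$, obtaining a constant $\delta'$ with $(\eta_i,y)\ge\min\{(\eta_i,\xi),(\xi,y)\}-\delta'$ for all $y\in\widehat{G}$, and then take $k$ large. The main obstacle is exactly this additive constant: the inequality yields only $(\eta_i,y)>k_i-\delta'$, not $(\eta_i,y)>k_i$, so $N_k(\xi)$ need not lie inside $N_{k_i}(\eta_i)$ for the original thresholds. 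The way I would get around this is to pass to the metric $d_\varepsilon$. Using the standard comparison $d_\varepsilon(a,b)\asymp\exp(-\varepsilon(a,b))$ (valid under the hypothesis $\varepsilon'\le\sqrt{2}-1$), each $N_k(\xi)=\{y:\exp(-\varepsilon(\xi,y))<\exp(-\varepsilon k)\}$ is trapped between two concentric $d_\varepsilon$-balls about $\xi$ of radii comparable to $\exp(-\varepsilon k)$; in particular the $d_\varepsilon$-diameter of $N_k(\xi)$ tends to $0$ as $k\to\infty$.

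With this comparison in hand I would finish by showing that the whole family is a basis for the $d_\varepsilon$-topology, which is already a (compact) topology by Theorem~\ref{BoundComp}. For a family of open sets this reduces to two checks: that every listed set is $d_\varepsilon$-open, and that the members containing a given point form a neighbourhood basis there. Vertices are isolated, since $(x,y)\le d(x,o)$ for every $y\in\widehat{G}$ forces $d_\varepsilon(x,\cdot)$ to stay bounded away from $0$; thus $B_r(x)$ is a finite set of isolated points, open, and equals $\{x\}$ once $r\le 1$, giving a neighbourhood basis at $x$. At a boundary point $\xi$ the sets $N_k(\xi)$ are open and, by the diameter bound above, eventually lie inside any prescribed $d_\varepsilon$-ball about $\xi$, hence form a neighbourhood basis. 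The single delicate ingredient, which I expect to require the most care, is the openness of $N_k(\xi)$ at its non-central points, where hyperbolicity re-enters; I would deduce it from the lower semicontinuity of the extended Gromov product (a supremum of lower semicontinuous functions), or equivalently from the two-sided comparison between $d_\varepsilon$ and $\exp(-\varepsilon(\cdot,\cdot))$.
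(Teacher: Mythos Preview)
The paper does not prove this proposition at all: it is quoted from \cite[Proposition~4.8]{A} and closed with a \qed. So there is nothing to compare against, and your write-up is an independent proof attempt.

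You correctly isolate the only non-trivial case and the genuine obstacle: from $(\eta_i,\xi)>k_i$ and $(\xi,y)$ large one only gets $(\eta_i,y)\ge (\eta_i,\xi)-C$ for some $C$ depending on $\delta$, which need not exceed $k_i$. Your proposed workaround, however, runs into the \emph{same} obstacle in disguise. The claim that $y\mapsto(\eta,y)$ is lower semicontinuous because it is ``a supremum of lower semicontinuous functions'' is not justified: for a fixed pair of sequences, $\liminf_{i,j}(x_i,y_j)$ is not a function of the limit point $y$ alone, so there is no family of lsc functions whose supremum is $(\eta,\cdot)$. What one actually obtains from the extended Gromov inequality is only $\liminf_{n}(\eta,y_n)\ge(\eta,y)-C$ whenever $y_n\to y$, i.e.\ lower semicontinuity \emph{up to an additive constant}. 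That is not enough to conclude that $N_k(\eta)=\{y:(\eta,y)>k\}$ is $d_\varepsilon$-open at a point $\xi$ with $(\eta,\xi)$ only barely above $k$, which is exactly the configuration you were trying to handle. The two-sided comparison $d_\varepsilon\asymp\exp(-\varepsilon(\cdot,\cdot))$ likewise only traps $N_k(\eta)$ between two $d_\varepsilon$-balls about $\eta$; it says nothing about openness at other points of $N_k(\eta)$.

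There is also a mild circularity concern: within this paper's ordering, the comparison you invoke is the content of Proposition~\ref{MetricTopologyOfBoundary}, stated \emph{after} the result you are proving and phrased as compatibility with ``the just defined topology''. In the primary sources the inequality is established independently of the topology, so this is a presentational rather than a logical issue, but it should be flagged. If you want a self-contained argument, the cleanest route (and the one taken in \cite{A} and \cite{GdH}) is not to verify the global basis axiom directly but to specify neighbourhood bases at each point (the $B_r(x)$ at vertices, the $N_k(\eta)$ at $\eta\in\partial G$) and to check the neighbourhood-filter axioms; these only require the ``up to $\delta$'' inequalities you already have, and they yield a topology in which each $N_k(\eta)$ contains an open neighbourhood of $\eta$ without needing $N_k(\eta)$ itself to be open.
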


This topology is compatible with the metrics $d_\varepsilon$, which for locally finite graphs, makes the boundary a compact metric space by the following theorem.

\begin{Tm}\label{MetricTopologyOfBoundary}\label{BoundComp}{\rm \cite[Propositions 7.3.10 and 7.2.9]{GhHaSur}}
Let $G$ be a locally finite hyperbolic graph. For all $\varepsilon>0$ with $\varepsilon'=\exp(\varepsilon\delta)-1<\sqrt{2}-1$, the metric space $(\widehat{G},d_\varepsilon)$ is a compact metric space such that its metric $d_\varepsilon$ is compatible with the just defined topology in the sense that we have
\[\varepsilon'\cdot\exp(-\varepsilon\cdot(\eta,\nu))\leq d_\varepsilon(\eta,\nu)\leq\exp(-\varepsilon\cdot(\eta,\nu))\]
for all $\eta,\nu\in\rand G$.\qed
\end{Tm}

Theorem~\ref{MetricTopologyOfBoundary} shows that the definitions of the topological space are equivalent. We may thus use the direct definition of the topology or the definition via the metric depending on the situation.

We close this section with some propositions that we shall need later.

\begin{Prop}\label{RightChoiceOfGeodRays} {\rm \cite[(22.11) and (22.15)]{woessBook}}
Let $G$ be a locally finite hyperbolic graph with two distinct boundary points $\eta$ and $\nu$.
Let $o$ be a vertex, $(x_i)_{i\in\nat}$ a geodesic ray converging to $\eta$, and $(y_j)_{j\in\nat}$ a geodesic ray converging to $\nu$.
Then the following two statements hold:
\begin{enumerate}[(i)]
\item There is a geodesic ray in $G$ starting at $o$ and having only finitely many vertices different from $(x_i)_{i\in\nat}$.
\item There is a geodesic double ray having only finitely many vertices different from $(x_i)_{i\in\nat}$ and $(y_j)_{j\in\nat}$.
One side of this double ray converges to $\eta$, the other to~$\nu$.\qed
\end{enumerate}
\end{Prop}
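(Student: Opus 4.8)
The plan is to prove both parts by a monotone ``discrete Busemann'' argument that pins a tail of a given ray onto a geodesic issuing from the prescribed vertex, and in part (ii) bridges the two tails by a single finite geodesic. The only place where hyperbolicity is genuinely needed is a boundedness estimate for Gromov products of sequences tending to two \emph{distinct} boundary points.

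For (i), write the ray as $(x_i)_{i\in\nat}$, so that $d(x_0,x_m)=m$, and set
\[
b_m := d(o,x_m) - m .
\]
Since $x_m$ and $x_{m+1}$ are adjacent, $d(o,x_{m+1})-d(o,x_m)\le 1$, so $b_{m+1}-b_m\le 0$; and $b_m\ge -d(o,x_0)$ by the triangle inequality. Thus $(b_m)$ is non-increasing, integer-valued and bounded below, hence eventually constant: there is an $N$ with $d(o,x_m)=d(o,x_N)+(m-N)$ for all $m\ge N$. Concatenating any geodesic $[o,x_N]$ with the subray $x_Nx_{N+1}\cdots$ therefore yields a geodetic ray starting in $o$ that agrees with $(x_i)$ from $x_N$ on; it differs from $(x_i)$ in only the finitely many vertices of $[o,x_N]$ and converges to $\eta$ because it shares a tail with $(x_i)$. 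This step uses nothing beyond the integrality of the graph metric.

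For (ii), I would look for $M,N$ and a geodesic $[x_M,y_N]$ so that
\[
\cdots x_{M+1}x_M\;[x_M,y_N]\;y_Ny_{N+1}\cdots
\]
is a geodetic double ray. The governing quantity is $f(m,n):=d(x_m,y_n)-m-n$, which by the adjacency argument above is non-increasing in each of $m$ and $n$ and integer-valued. Fixing $n$, the sequence $f(\cdot,n)$ is bounded below (triangle inequality with basepoint $y_n$, as in (i)), hence eventually constant with a limit $g(n)$, and $g$ is again non-increasing and integer-valued. If $g$ is bounded below it is eventually constant, say $g(n)=c$ for $n\ge N$; choosing $M$ with $f(m,N)=c$ for $m\ge M$, a two-variable squeeze (using monotonicity in each variable and $f(m,n)\ge\lim_{m'}f(m',n)=c$) pins $f(m,n)=c$, i.e. $d(x_m,y_n)=m+n+c$, for all $m\ge M$, $n\ge N$. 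With $L:=d(x_M,y_N)=M+N+c$ one checks, purely by triangle inequalities and the identity $d(x_m,y_N)=m+N+c$, that the displayed concatenation realises the distance between any two of its vertices as their separation along the double ray; in particular these distances are positive, so the double ray is injective, geodetic, and has its two sides converging to $\eta$ and $\nu$, differing from $(x_i)$ and $(y_j)$ in only finitely many vertices.

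The main obstacle is exactly the hypothesis of the previous paragraph: that $g(n)=\lim_m\bigl(d(x_m,y_n)-m\bigr)-n$ is bounded below. This is where $\eta\ne\nu$ and hyperbolicity enter. Via the identity $d(x_m,y_n)=m+d(x_0,y_n)-2(x_m,y_n)_{x_0}$ and $d(x_0,y_n)\ge n-d(x_0,y_0)$, boundedness of $g$ reduces to a uniform upper bound on $(x_m,y_n)_{x_0}$ for large $m,n$. I would prove this by contradiction: if $(x_{m_k},y_{n_k})\to\infty$ along a subsequence, then the hyperbolic four-point inequality (extended to the boundary point $\nu$ up to an additive constant, and base-independent by the earlier proposition),
\[
(x_{m_k},\nu)\ge\min\set{(x_{m_k},y_{n_k}),(y_{n_k},\nu)}-2\delta ,
\]
together with $(y_{n_k},\nu)\to\infty$ forces $(x_{m_k},\nu)\to\infty$, i.e. $x_{m_k}\to\nu$; but $x_{m_k}\to\eta$, so $\eta=\nu$, contradicting the hypothesis. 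Feeding this bound back gives $g(n)\ge-\bigl(d(x_0,y_0)+2C\bigr)$ and completes part (ii).
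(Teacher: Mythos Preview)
The paper does not prove this proposition; it merely cites Woess \cite[(22.11) and (22.15)]{W} and closes with a box. Your argument is a correct self-contained proof of both parts.

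Your approach---exploiting that the integer-valued non-increasing Busemann-type quantities $b_m=d(o,x_m)-m$ and $f(m,n)=d(x_m,y_n)-m-n$ must eventually stabilise once bounded below, and then verifying that the resulting concatenations are geodetic via the two opposing triangle inequalities---is the standard one and is essentially what one finds in Woess. The identification of the one genuinely nontrivial point, the lower bound on $g(n)$ in part~(ii), and its reduction to a uniform bound on $(x_m,y_n)_{x_0}$ via $\eta\ne\nu$, is exactly right. One minor simplification: you can sidestep extending the Gromov product to the boundary point $\nu$ by applying the four-point inequality purely to vertices,
\[
(x_{m_k},y_{n_l})_{x_0}\ge\min\bigl\{(x_{m_k},y_{n_k})_{x_0},\,(y_{n_k},y_{n_l})_{x_0}\bigr\}-\delta,
\]
and using $(y_{n_k},y_{n_l})_{x_0}\to\infty$ (since $(y_j)\to\nu$) to conclude that the interleaved sequence $x_{m_1},y_{n_1},x_{m_2},y_{n_2},\ldots$ converges to a single boundary point, which is then forced to equal both $\eta$ and $\nu$. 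This keeps the argument entirely at the vertex level, in line with the tools the paper has set up before this proposition.
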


A direct consequence of~\cite[Lemma~4.6 (4)]{ABCFLMSS} and \cite[Remark~7.2.7]{GhHaSur} is the following proposition.

\begin{Prop}\label{LimesOfGromovProd}\label{DistanceToGeod}\label{geodIn4Delta}
Let $G$ be a $\delta$-hyperbolic graph, let $\eta,\nu\in\rand G$, and let $o$ be the base-point of the Gromov-product. For all geodesic double rays $\pi$ from $\eta$ to~$\nu$ (i.e.\ one side converges to~$\eta$ and the other to~$\nu$) the following inequality holds:

\hfill$\,\,(\eta,\nu)-2\delta\leq d(o,\pi)\leq (\eta,\nu)+2\delta.$\qed
\end{Prop}

\section{Dimensions of topological spaces}\label{Topology}

Let us introduce the first dimension concept, depending only on the topology of a space.
Let $X$ be a topological space.
A {\em refinement\/} $\UF$ of an open cover $\VF$ of~$X$ is an open cover of $X$ such that for every $U\in \UF$ there is a $V\in\VF$ with $U\subseteq V$.
The space $X$ has {\em topological dimension at most $n$} if every open cover has a refinement such that each $x\in X$ lies in at most $n+1$ elements of the refinement, and $X$ has {\em topological dimension $n$} (notation: $\dim(X)=n$) if it has topological dimension at most $n$ but not topological dimension at most $n-1$.
If there exists no $n\in\nat$ such that $X$ has topological dimension at most $n$ then $X$ has {\em infinite topological dimension}.
We call an open cover $\UF$ of a topological space $X$ with topological dimension~$n$ {\em critical} if there exists no refinement $\VF$ of~$\UF$ such that each $x\in X$ lies in at most $n$ sets $V\in \VF$.

\smallskip

Let us now introduce the second dimension concept, depending on the metric of a space.
Let $X$ be a metric space.
For $\alpha,\beta>0$ let $S(\alpha,\beta)$ be the maximal cardinality of a subset $V$ of $X$ with $\alpha\leq d_X(x,y)\leq \beta$ for all $x\neq y\in V$.
Let $n$ be the infimum of all $s\geq 0$ such that there is a $C\ge 0$ with $S(\alpha,\beta)\leq C(\frac{\beta}{\alpha})^s$ for all $0<\alpha\leq\beta$.
Then $n$ is called the {\em Assouad dimension} of the metric space $X$ and we write $\dim_A(X)=n$.
If no such $s$ exists then we say that $X$ has \emph{infinite Assouad dimension}.

\smallskip

Furthermore, we introduce a property of metric spaces.
Let $X$ be a metric space. $X$ is {\em doubling} if there is an integer $\kappa\geq 1$ such that every open ball of radius $r$ can be covered by at most $2^\kappa$ open balls of radius at most $\frac{r}{2}$.
Let $\dim_2(X)$ be the infimum of all $\kappa$ such that $X$ is doubling with this $\kappa$.
A subset $Y$ of $X$ has {\em diameter} $\diam(Y):=\sup\set{d(x,y)\mid x,y\in Y}$, and a set $\YF$ of subsets of $X$ has {\em diameter} $\diam(\YF):=\sup\set{\diam(Y)\mid Y\in\YF}$.
For every $r\geq 0$, a family $\BF=(B_i)_{i\in I}$ of subsets of $X$ has {\em $r$-multiplicity} $n$ if every subset of $X$ with diameter at most $r$ intersects with at most $n$ and if one subset with diameter at most $r$ intersects with precisely $n$ members of the family.
A point $x\in X$ has {\em $r$-multiplicity} $n$ in $\BF$ if $\overline{B_r(x)}$ intersects with precisely $n$ members of the family $\BF$ non-trivially.

For a metric space $X$ it is equivalent that $X$ is doubling and that $\dim_A(X)$ is finite by the following theorem of Luukkainen.

\begin{Tm}\label{AssouadDoubling}{\rm \cite[Theorem A.3]{Luukainen}}
Let $X$ be a metric space.
Then $X$ is doubling if and only if it has finite Assouad dimension.\qed
\end{Tm}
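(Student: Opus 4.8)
The plan is to exploit the standard duality between packing and covering numbers. The quantity $S(\alpha,\beta)$ is a packing-type quantity: it counts points that are simultaneously $\alpha$-separated and of diameter at most $\beta$. The doubling condition, on the other hand, is a covering condition. The two directions of the equivalence will then be the two halves of the packing--covering comparison, applied once each, and both will come out with explicit constants so that one in fact reads off $\dim_A(X)\le\dim_2(X)$ up to additive and multiplicative constants and conversely.

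For the direction that doubling implies finite Assouad dimension, I would assume that every ball of radius $r$ is covered by at most $2^\kappa$ balls of radius at most $r/2$. Iterating this $k$ times, every ball of radius $r$ is covered by at most $2^{\kappa k}$ balls of radius at most $r/2^k$. Now fix $0<\alpha\le\beta$ and let $V\subseteq X$ be a set realising $S(\alpha,\beta)$, so that all pairwise distances in $V$ lie in $[\alpha,\beta]$. Picking any $v_0\in V$ gives $V\subseteq B_{2\beta}(v_0)$, and I cover this ball by at most $2^{\kappa k}$ balls of radius $2\beta/2^k$, where $k$ is chosen minimal with $2\beta/2^k<\alpha/2$. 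Since any two distinct points of $V$ are at distance at least $\alpha$, a ball of radius strictly below $\alpha/2$ contains at most one point of $V$; hence $\abs{V}$ is at most the number of covering balls, $2^{\kappa k}$. The minimality of $k$ gives $2^k\le 8\beta/\alpha$, so $\abs{V}\le (2^{k})^{\kappa}\le 8^{\kappa}(\beta/\alpha)^{\kappa}$. Thus $S(\alpha,\beta)\le C(\beta/\alpha)^{\kappa}$ with $C=8^{\kappa}$, and therefore $\dim_A(X)\le\kappa<\infty$.

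For the converse, I would assume $\dim_A(X)<\infty$, so that there are $s\ge 0$ and $C\ge 0$ with $S(\alpha,\beta)\le C(\beta/\alpha)^s$ for all $0<\alpha\le\beta$; in particular every $S(\alpha,\beta)$ is finite. Fix a ball $B_r(x)$ and choose a subset $V\subseteq B_r(x)$ that is maximal with respect to the property that its points are pairwise at distance at least $r/2$. Such a set exists and is finite, because its points lie in $B_r(x)$ and so are pairwise at distance at most $2r$, whence $\abs{V}\le S(r/2,2r)\le C\,4^s$. By maximality every point of $B_r(x)$ lies within $r/2$ of some $v\in V$, so the balls $B_{r/2}(v)$ with $v\in V$ cover $B_r(x)$. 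Hence $B_r(x)$ is covered by at most $C\,4^s$ balls of radius $r/2$, a bound independent of $r$ and $x$; choosing $\kappa\ge 1$ with $2^\kappa\ge C\,4^s$ shows that $X$ is doubling.

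The only mildly delicate points are bookkeeping ones rather than conceptual obstacles. In the first direction one must track how the number of iterations $k$ of the doubling property grows only logarithmically in $\beta/\alpha$, so that the iterated count $2^{\kappa k}$ collapses to a genuine power $(\beta/\alpha)^{\kappa}$; and one must keep the radius of the covering balls strictly below $\alpha/2$, since otherwise a single ball could swallow two points of the packing $V$ and the counting would fail. In the second direction the only thing to verify carefully is that a maximal $r/2$-separated subset of $B_r(x)$ is automatically finite, which is exactly where the finiteness of $S(r/2,2r)$ furnished by $\dim_A(X)<\infty$ is used. Everything else is the routine packing--covering comparison.
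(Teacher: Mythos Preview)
Your argument is correct and is the standard packing--covering comparison one finds in the literature. Note, however, that the paper does not actually prove this theorem: it is stated with a \qed\ and attributed to Luukkainen \cite[Theorem A.3]{L}, so there is no proof in the paper to compare against. Your write-up would serve perfectly well as a self-contained replacement for that citation; the only cosmetic point is that when you invoke $\dim_A(X)<\infty$ to obtain $s$ and $C$, you should make explicit that any $s>\dim_A(X)$ works (the infimum in the definition need not be attained), which you implicitly do but do not say.
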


The proof of the following lemma is similar to the proof of a lemma by Lang and Schlichenmaier \cite[Lemma 2.3]{LS-Nagata}.
But since our claim includes additional statements to the mentioned lemma and the constants vary, we give a proof of the whole lemma here.

\begin{Lem}\label{LS2.3}
Let $X$ be a doubling metric space, let $N=2^{\dim_2(X)}$, and let $r>0$.
Then $X$ has a covering $\BF$ of closed balls of diameter at most~$2r$ such that $\BF$ is the disjoint union of at most $N^4$ subsets $\BF_i$ of~$\BF$ each of which has $r$-multiplicity at most $1$; so $\BF$ has $r$-multiplicity at most~$N^4$.

Furthermore, it is possible to choose $\BF$ so that a given subset $Y$ of~$X$ with $d(x,y)>r$ for all $x,y\in Y$ is a subset of the set of centers of the balls in~$\BF$, so that each two centers have distance more than~$r$, and so that every center has $3r$-multiplicity at most $N^4$ in~$\BF$.

Additionally, if $Y$ is finite and $X$ bounded, then we may choose $\BF$ finite.
\end{Lem}

\begin{proof}
Let $Z$ be a maximal subset of~$X$ with $Y\subseteq Z$ such that $d(x,y)>r$ for each two distinct $x,y\in Z$ and let $\BF=\{\overline{B}_r(z)\mid z\in Z\}$.
We shall show that $\BF$ has the claimed properties.

By the maximality of~$Z$, we know that $\BF$ is a covering of~$X$.
Since $X$ is doubling, we know that for all $z\in Z$ the ball $\overline{B}_{4r}(z)\subseteq B_{8r}(z)$ can be covered by at most $N^4$ balls of diameter at most~$r$ each of which contains at most one element of~$Z$.
Thus, we have $|Z\cap \overline{B}_{4r}(z)|\leq N^4$ and hence every $z\in Z$ has $3r$-multiplicity at most $N^4$ in~$\BF$.
By \cite[Lemme 2.4]{Assouad} there is a map $f:Z\to\{1,\ldots, N^4\}$ with $f(x)\neq f(y)$ for all $x,y\in Z$ with $0<d(x,y)\leq 4r$.
Every $\BF_i:={\{\overline{B}_r(z)\mid z\in Z,\, f(z)=i\}}$ has $r$-multiplicity at most~$1$.
The additional statement is a direct consequence of the doubling property, so the claim follows.
\end{proof}

\begin{Rem}\label{compareDimension}
{\rm \cite[Facts 3.3]{Luukainen}}
We have $\dim(X)\leq \dim_A(X)$ for every metric space~$X$.
\end{Rem}

\section{Geodesic spanning trees in hyperbolic graphs}\label{GeodSpannTreesSection}

In this section, we shall prove Theorem \ref{TheoremGeodTree} and give an example of a locally finite hyperbolic graph that has only one boundary point but each of whose rooted geodesic spanning trees has infinitely many distinct rays starting at the root and converging to the only boundary point.
Before we prove Theorem~\ref{TheoremGeodTree}, we first prove some propositions.

\begin{Prop}\label{geodTreeClosed}
Let $G$ be a locally finite hyperbolic graph, $(T,r)$ a rooted geodesic spanning tree of $G$, and $S$ a finite set of vertices of~$G$.
Let $Z$ be the set of limit points of all rays in a connected component $C$ of $T-S$. Then $Z$ is closed in~$\rand G$.
\end{Prop}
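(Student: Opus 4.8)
The statement I want to prove is Proposition~\ref{geodTreeClosed}: the set $Z$ of limit points of all rays in a connected component $C$ of $T-S$ is closed in $\rand G$. Since $\widehat{G}$ is a compact metric space by Theorem~\ref{BoundComp}, $Z$ is closed if and only if it contains all its limit points. The plan is therefore to take a sequence $(\eta_k)_{k\in\nat}$ of points in $Z$ converging to some $\eta\in\rand G$ and show that $\eta\in Z$, i.e.\ that there is a ray in $C$ converging to $\eta$. For each $k$, by definition of $Z$ there is a ray $\pi_k$ in $C$ converging to $\eta_k$; the goal is to manufacture, from these rays, a single ray in $C$ that converges to the limit point $\eta$.

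\textbf{The diagonal/tree-of-rays construction.}
First I would fix a base vertex $o$ (which I may take to be the root of $T$, or any vertex of $C$) and use the fact that $T$ is geodetic: every vertex of $C$ lies on a geodesic from the root, and every ray $\pi_k$ in $C$ is eventually geodetic. The key geometric input is that convergence $\eta_k\to\eta$ in $\widehat{G}$ means, via Proposition~\ref{MetricTopologyOfBoundary}, that the Gromov products $(\eta_k,\eta)_o$ tend to infinity. By Proposition~\ref{DistanceToGeod} and Proposition~\ref{LimesOfGromovProd}, this forces the geodesic rays toward $\eta_k$ and toward $\eta$ to fellow-travel for longer and longer initial segments: any two rays converging to boundary points that are close in $d_h$ share an initial segment of length roughly $(\eta_k,\eta)_o$ up to an additive $\delta$-error. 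The plan is then to apply the infinity lemma, Lemma~\ref{InfinityLemma}. Concretely, I would let $V_n$ be the (finite, since $G$ is locally finite) set of vertices of $C$ at distance $n$ from $o$ that lie on infinitely many of the rays $\pi_k$ (or that lie on $\pi_k$ for arbitrarily close $\eta_k$); the fellow-traveling property guarantees each nonempty $V_n$ has a predecessor in $V_{n-1}$, so Lemma~\ref{InfinityLemma} yields a ray $\pi\subseteq C$.

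\textbf{Identifying the limit of the extracted ray.}
Having produced the ray $\pi$ inside $C$, the remaining task is to show $\pi$ converges precisely to $\eta$. Because each finite initial segment of $\pi$ agrees with $\pi_k$ for boundary points $\eta_k$ arbitrarily close to $\eta$, and because those $\pi_k$ are (eventually) geodesic toward $\eta_k$, the vertices of $\pi$ at distance $n$ from $o$ have Gromov product with $\eta$ at least of order $n$ minus a bounded error; letting $n\to\infty$ shows $\pi\to\eta$. Since $\pi$ is a ray of the component $C$ of $T-S$, its limit point $\eta$ lies in $Z$, which closes the argument.

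\textbf{The main obstacle.}
The step I expect to be most delicate is ensuring that the ray extracted by the infinity lemma actually stays in the single component $C$ and does not drift through $S$; this is where I must use that $S$ is \emph{finite}, so that only finitely many vertices are forbidden and all but finitely many initial segments of the $\pi_k$ avoid $S$, keeping the extracted vertices in $C$. The second subtlety is converting the metric statement $d_h(\eta_k,\eta)\to 0$ into a uniform fellow-traveling bound with controlled $\delta$-error, so that the sets $V_n$ are genuinely nonempty for every $n$ and the predecessor map $f$ is well defined; this is purely a matter of chaining Propositions~\ref{MetricTopologyOfBoundary}, \ref{LimesOfGromovProd}, and \ref{DistanceToGeod}, but it must be done carefully to avoid an error term that grows with $n$.
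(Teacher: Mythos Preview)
Your proposal is correct and follows essentially the same route as the paper: pick rays $\pi_k$ in (a tail of) $C$ to each $\eta_k$, use local finiteness (the paper phrases this as ``each edge of $\pi$ lies in infinitely many $\pi_i$'', which is exactly your infinity-lemma extraction) to obtain a limiting ray $\pi$ in $C$, observe that $\pi$ is geodetic and hence has a unique limit, and identify that limit with $\eta$ via the Gromov product. One simplification you are missing: your second ``obstacle'' about controlling $\delta$-errors in fellow-traveling is illusory, because the rays $\pi_k$ live in the \emph{tree} $T$ and start from a common vertex, so any two of them share an initial segment \emph{exactly} (no $\delta$ involved); the hyperbolicity of $G$ is needed only at the very end, to argue via the Gromov product that the extracted geodetic ray converges to $\eta$ rather than to some other boundary point.
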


\begin{proof}
Let $z$ be a boundary point of $G$ with $z\in\overline{Z}$ and let $(\eta_i)_{i\in\nat}$ be an infinite sequence of boundary points in~$Z$ converging to~$z$.
Let $\pi_i$ be a geodesic ray from the root of $T$ to $\eta_i$ with only finitely many vertices outside $C$.
Since $G$ is locally finite, so is $T$.
Hence there exists a ray $\pi$ in $T$ such that each edge of $T$ lies in infinitely many of the rays $\pi_i$.
Since $T$ is a geodesic tree, $\pi$ is geodesic and hence has exactly one limit point $\eta$.
Furthermore, $\pi$ has also only finitely many vertices outside $C$, as $S$ is finite and as $\{d(r,s)\mid s\in S\}$ is bounded.
Thus, $\eta$ is an element of~$Z$.
Using the Gromov-product we conclude that $\pi$ converges towards the limit point of $(\eta_i)_{i\in\nat}$ and hence $z=\eta\in Z$.
\end{proof}

\begin{Prop}\label{TwoStar}
Let $G$ be a locally finite hyperbolic graph and let $\UF$ be a finite open cover of $\rand G$. Every rooted geodesic spanning tree $(T,r)$ has the following property:
\begin{itemize}
\item[$(\dagger)$] There is a finite vertex set $S\subseteq VG$ such that for every component $C$ of $T-S$ there is a $U\in\UF$ such that every ray in $C$ converges to some $u\in U$.
\end{itemize}
\end{Prop}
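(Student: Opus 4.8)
The plan is to take $S$ to be a ball around the root and to use compactness to pass from a diameter bound to containment in a single cover element. Throughout I take the root $r$ as the base point of the Gromov-product. Since $\widehat{G}$ is compact by Theorem \ref{BoundComp}, the boundary $\rand G$ with the metric $d_\varepsilon$ is a compact metric space, so the finite open cover $\UF$ has a Lebesgue number $\lambda>0$: every subset of $\rand G$ of $d_\varepsilon$-diameter less than $\lambda$ lies in some $U\in\UF$. Hence it suffices to find a finite set $S$ such that for every component $C$ of $T-S$ the set $Z_C$ of limit points of rays in $C$ satisfies $\diam(Z_C)<\lambda$. For the infinite components this forces $Z_C\subseteq U$ for one $U\in\UF$, and the finite components carry no rays, so the requirement is vacuous there.

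First I would record the structural fact that makes geodetic trees tractable: in the rooted tree $T$ every ray has a tail that moves monotonically away from $r$ (the tree has no cycles, so once a ray leaves its distance-minimizing vertex it can never move back towards $r$), and since $d_T(r,\cdot)=d_G(r,\cdot)$ this tail is a geodetic ray of $G$ and so converges to a single boundary point. Now I set $S=B_R(r)$ for a radius $R$ to be fixed; it is finite because $G$ is locally finite, and it induces a connected subtree of $T$ (every $r$–$v$ geodesic with $d(r,v)\le R$ stays inside $S$). Consequently each component $C$ of $T-S$ is attached to $S$ by a unique edge and has a unique entry vertex $v_C\in C$ with $d(r,v_C)=R+1$. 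Every ray in $C$ has its geodetic tail inside $C$, and extending this tail backwards along the unique $r$–$v_C$ geodesic of $T$ produces a geodetic ray from $r$ through $v_C$; thus all these rays share the initial segment $[r,v_C]$ of length $R+1$.

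The heart of the argument is then a Gromov-product estimate. If two geodetic rays $(x_i)$, $(y_j)$ from $r$ agree up to a common vertex at distance $k$, then for $i,j\ge k$ one has $d(x_i,y_j)\le(i-k)+(j-k)$, so $(x_i,y_j)_r=\tfrac12(i+j-d(x_i,y_j))\ge k$, and therefore the endpoints $\eta,\nu$ satisfy $(\eta,\nu)\ge k$. Applying this with $k=R+1$ to any two rays in $C$ gives $(\eta,\nu)\ge R$ for all $\eta,\nu\in Z_C$, whence $d_\varepsilon(\eta,\nu)\le\exp(-\varepsilon R)$ by Proposition \ref{MetricTopologyOfBoundary}. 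Thus $\diam(Z_C)\le\exp(-\varepsilon R)$ uniformly over all components $C$, and choosing $R$ large enough that $\exp(-\varepsilon R)<\lambda$ completes the proof.

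I expect the main obstacle to be the two tree-theoretic facts underpinning the estimate: that every ray of a geodetic spanning tree is eventually geodetic, and that the rays of a component $C$ all funnel through its single entry vertex $v_C$, so that they genuinely share a long initial geodesic. Once these are in hand, the Gromov-product bound and the passage from diameter to cover element are routine. Note that Proposition \ref{geodTreeClosed} is not logically needed for this route, but it confirms that $Z_C$ is exactly the (closed) set that must be placed inside a member of $\UF$.
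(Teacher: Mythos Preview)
Your argument is correct and is a genuinely different---and cleaner---route than the paper's. The paper argues by contradiction: assuming no finite $S$ works, it keeps enlarging $S$ by vertices from a ``bad'' component, invokes the Infinity Lemma (Lemma~\ref{InfinityLemma}) to extract a ray $\pi$ in $T$ along which infinitely many such vertices are chosen, and then observes that once one removes a single vertex of $\pi$ far enough from the root, the component beyond it has all its limit points in the $U\in\UF$ containing the limit of $\pi$; this contradicts the need for infinitely many vertices of $\pi$. Your proof replaces the contradiction and K\"onig's lemma by a direct Lebesgue-number argument and an explicit choice $S=B_R(r)$: the key insight, which the paper does not exploit, is that because $T$ is geodetic the ball $B_R(r)$ induces a connected subtree, so every component of $T-S$ hangs off a single entry vertex and all its rays share a long initial geodesic segment. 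The payoff is that your $S$ is explicit and the estimate on $\diam(Z_C)$ is uniform, whereas the paper's proof only gives existence. (Two cosmetic points: with the paper's convention $B_R(r)=\{v:d(r,v)<R\}$ the entry vertex sits at distance $R$ rather than $R+1$, and the backward extension should be along the $r$--$w$ geodesic in $T$ (with $w$ the point of the ray closest to $r$), not just the $r$--$v_C$ geodesic; both are harmless since the former contains the latter.)
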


\begin{proof}
Let us suppose that there is no finite vertex set $S$ satisfying $(\dagger)$.
Thus, for every finite vertex set $S$ and for one component $C$ of $T-S$ the set $Z$ of all limit points of all rays in~$C$ is not a subset of any $U\in\UF$.
We extend $S$ by the down-closure $\lceil s\rceil$ of one vertex $s$ from $C$ (that is the set of all vertices on the unique $r$-$s$ path in~$T$) to split $C$ into more than one component and thus to obtain a refinement of~$Z$.
For $S\cup\lceil s\rceil$, there is again a component $C'$ with $\rand C'\not\subseteq U$ for every $U\in\UF$.
So we may extend $S$ recursively infinitely often but in each step we extend $S$ by only finitely many vertices.
Since $T$ is locally finite, there is a ray~$\pi$ in~$T$ from which we take infinitely many vertices during this process.
Let $\eta$ be the limit point of~$\pi$ in~$\rand G$.
Then there exists a $U\in\UF$ with $\eta\in U$.
Since $U$ is open and according to Proposition~\ref{prop_BasisTop}, there is a $k\in\nat$ such that every boundary point $\nu$ of~$G$ with $(\eta,\nu)_r\geq k$ lies in~$U$.
Let $x$ be the vertex on~$\pi$ with $d(r,x)=k+3\delta+1$ and let $\mu$ be a boundary point in the closure of that component of $T-x$ that contains the ray~$\pi$ eventually.
We consider one geodesic between each two of $x,\mu$, and~$\eta$.
The chosen geodesic between $\mu$ and~$\eta$ must lie in a $\delta$-neighbourhood of the other two geodesics.
Hence, we conclude with Proposition~\ref{LimesOfGromovProd} that $(\mu,\eta)_r\geq k+1$.
This is a contradiction to the choice of~$\pi$.
\end{proof}

\begin{Prop}\label{EpsilonCircles}
Let $G$ be a locally finite hyperbolic graph and let $T$ be a rooted geodesic spanning tree of~$G$.
Assume that there exists an $m\in\nat$ such that for every $\eta\in\rand G$, there are at most $m$ distinct rays in~$T$ starting at the root of~$T$ and converging to~$\eta$.
Let $\UF$ be a finite open cover of $\rand G$, let $S$ be a finite set of vertices as in~$(\dagger)$ in Lemma~\ref{TwoStar}, let $C_i$ be the infinite components of $T-S$, let $Z_i$ be the set of all limit points of rays in~$C_i$, and let $\ZF$ be the set of all $Z_i$.
Then there exists an $\varepsilon> 0$ such that for every $\eta\in\rand G$, the ball $B_\varepsilon(\eta)$ intersects with at most $m$ elements of $\ZF$ non-trivially.
\end{Prop}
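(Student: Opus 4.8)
Proposition \ref{EpsilonCircles} asserts that if a geodetic spanning tree $T$ sends at most $m$ rays (from the root) to each boundary point, then after fixing a finite separating set $S$ as in Proposition \ref{TwoStar}, there is a uniform $\varepsilon>0$ so that each $\varepsilon$-ball around any $\eta$ meets at most $m$ of the limit sets $Z_i$ of the infinite components $C_i$ of $T-S$.

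The plan is to argue by contradiction, extracting a single boundary point that would have to be the limit of too many tree-rays. First I would note that by Proposition \ref{geodTreeClosed} each $Z_i$ is a closed, hence compact, subset of $\rand G$, and there are only finitely many infinite components $C_i$ since $S$ is finite and $G$ is locally finite. Suppose the conclusion fails. Then for every $k\in\nat$ there is a boundary point $\eta_k$ such that the open ball $B_{1/k}(\eta_k)$ meets at least $m+1$ of the sets in $\ZF$. Since there are only finitely many subsets of $\ZF$, by pigeonhole there is a fixed collection $Z_{i_1},\dots,Z_{i_{m+1}}$ of $m+1$ distinct members of $\ZF$ and an infinite subsequence of the $\eta_k$ (which I relabel) such that $B_{1/k}(\eta_k)$ meets every one of $Z_{i_1},\dots,Z_{i_{m+1}}$.

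Next I would use compactness of $\widehat G$ to pass to a further subsequence along which $\eta_k\to\eta$ for some $\eta\in\rand G$. The key step is then to show that this single $\eta$ lies in each of the $m+1$ chosen limit sets $Z_{i_\ell}$. For each $\ell$, pick $\nu_k^{(\ell)}\in Z_{i_\ell}\cap B_{1/k}(\eta_k)$; then $d_h(\nu_k^{(\ell)},\eta)\le d_h(\nu_k^{(\ell)},\eta_k)+d_h(\eta_k,\eta)\to 0$, so $\nu_k^{(\ell)}\to\eta$, and since $Z_{i_\ell}$ is closed we get $\eta\in Z_{i_\ell}$. Thus $\eta$ lies in $m+1$ distinct $Z_{i_\ell}$. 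Finally, because the components $C_{i_\ell}$ are pairwise disjoint and $\eta\in Z_{i_\ell}$ means some ray of $T$ lying in $C_{i_\ell}$ converges to $\eta$, I would produce $m+1$ rays converging to $\eta$, each eventually contained in a distinct component, hence pairwise distinct. As $T$ is geodetic, these extend (or already are) rays from the root to $\eta$, giving $m+1$ distinct such rays and contradicting the hypothesis that at most $m$ exist.

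The main obstacle I anticipate is the last step: converting ``$\eta\in Z_{i_\ell}$ for $m+1$ distinct $\ell$'' into genuinely \emph{distinct} root-to-$\eta$ rays. A ray in $C_{i_\ell}$ need not start at the root, and different components could in principle share initial segments through $S$; so I must verify that two rays living in distinct components $C_{i_\ell}, C_{i_{\ell'}}$ of $T-S$ are distinct as rays even after prepending the (unique, since $T$ is a tree) geodetic path from the root to $C_{i_\ell}$. Since $T$ is a tree and the $C_{i_\ell}$ are distinct components of $T-S$, the rays diverge once they leave $S$, so they are indeed distinct; making this precise, together with the observation that in a geodetic spanning tree every ray is a subray of a root-ray, is where I would spend the care, but it should present no serious difficulty.
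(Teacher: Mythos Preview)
Your proposal is correct and follows essentially the same route as the paper: assume failure, extract a sequence $(\eta_k)$ with shrinking balls meeting $m+1$ members of $\ZF$, use finiteness of $\ZF$ and compactness of $\rand G$ to find a limit $\eta$ lying in the closure of $m+1$ fixed sets $Z_{i_\ell}$, and invoke Proposition~\ref{geodTreeClosed} to conclude $\eta\in Z_{i_\ell}$ for each $\ell$, contradicting the ray bound. The only difference is organizational: the paper dispatches your ``main obstacle'' in its opening sentence (membership of $\eta$ in $k$ distinct $Z_i$ yields $k$ distinct root-rays to $\eta$, since rays with tails in distinct components of $T-S$ determine distinct root-rays in the tree $T$), whereas you defer it to the end and discuss it more carefully---but the content is the same.
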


\begin{proof}
Since there are only $m$ distinct $r$-$\eta$-rays for every $\eta\in\rand G$, each boundary point is contained in at most $m$ different elements of~$\ZF$.
Let us assume that the assertion does not hold.
Then there is an infinite sequence of boundary points $(\eta_i)_{i\in\nat}$ and an infinite sequence of real numbers $(\varepsilon_i)_{i\in\nat}$ that converges to~$0$ such that every $\varepsilon_i$-neighbourhood of $\eta_i$ intersects with at least $m+1$ elements of $\ZF$ non-trivially.
The sequence of the boundary points has an accumulation point $\eta$ as $\rand G$ is compact.
Thus, we may assume that the sequence converges towards $\eta$.
Since $\ZF$ has only finitely many elements, there is a set $Z_{i_1}$ which intersects with an infinite subsequence of $(\eta_i)_{i\in\nat}$ non-trivially.
Because each $\varepsilon_i$-neighbourhood of $\eta_i$ intersects with $m+1$ elements of $\ZF$ non-trivially, we find analogously distinct $Z_{i_2},\ldots,Z_{i_{m+1}}\in\ZF$ such that each of them intersects with infinitely many $B_{\varepsilon_i}(\eta_i)$ non-trivially.
Hence, $\eta$ lies in the closure of all $Z_{i_1},\ldots,Z_{i_{m+1}}$.
The sets $Z_{i_j}$ are closed by Lemma \ref{geodTreeClosed}, so $\eta$~lies in all of them.
But this contradicts the fact that any $\mu\in\rand G$ lies in at most $m$ elements of~$\ZF$.
\end{proof}

Now we are able to prove Theorem \ref{TheoremGeodTree}.

\begin{proof}[{\itshape Proof of Theorem \ref{TheoremGeodTree}.}]
Let $\UF$ be a critical open cover of $\rand G$.
As $\rand G$ is compact, we may assume that $\UF$ is finite.
Additionally, we may assume that there is an $m$ such that $T$ contains at most $m$ distinct rays from the root to each $\eta\in\rand G$ since otherwise the theorem trivially holds.
By Proposition \ref{TwoStar}, there is a vertex set $S\subseteq V(G)$ such that for the components $C_1,\ldots,C_k$ of $G-S$ and the sets $Z_i$ of all the limit points of rays in~$C_i$, we have that every $Z_i$ is a subset of some $U\in\UF$.
Due to Proposition~\ref{geodTreeClosed}, the sets $Z_i$ are closed and due to Proposition~\ref{EpsilonCircles}, there is an $\varepsilon>0$ such that $B_\varepsilon(\eta)$ intersects with only $m$ elements of $\ZF:=\{Z_1,\ldots,Z_k\}$ non-trivially for every $\eta\in\rand G$.
Let us define for every $Z\in\ZF$ the set $Z'$ to be $Z$ together with all open $\varepsilon$-neighbourhoods around all elements of~$Z$.
Then $Z'$ is an open set.
Let $U$ be in $\UF$ with $Z\subseteq U$, and let $Z''$ be $Z'\cap U$.
Then $Z''$ is an open set, too.
Let $\VF$ be the set consisting of all the sets $Z''$ with $Z\in\ZF$.
By construction, $\VF$ is an open cover of $\rand G$ and also a refinement of $\UF$.
Since $\UF$ is critical, there is an $\eta\in\rand G$ that lies in at least $n+1$ elements of $\VF$.
For a set $Z''$ that contains~$\eta$, the set $Z$ intersects non-trivially with $B_\varepsilon(\eta)$.
Since there are at most $m$ such sets in~$\ZF$, we obtain $m\geq n+1$.
\end{proof}

We immediately get the following corollary of Theorem \ref{TheoremGeodTree}.

\begin{Cor}
Let $G$ be a locally finite hyperbolic graph whose boundary has infinite topological dimension.
Then for every rooted geodesic spanning tree $T$ there is no $n\in\nat$ such that for every boundary point $\eta\in\rand G$ there are at most $n$ distinct rays in $T$ starting at the root and converging to $\eta$.\qed
\end{Cor}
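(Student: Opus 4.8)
The plan is to prove the contrapositive by extracting a dimension bound directly from the proof of Theorem~\ref{TheoremGeodTree}. Suppose, for contradiction, that $\rand G$ has infinite topological dimension but that there is an $m\in\nat$ such that $T$ has at most $m$ distinct rays from the root to each $\eta\in\rand G$. One cannot simply quote Theorem~\ref{TheoremGeodTree}, since that statement presupposes a finite dimension $n\in\nat$ and hence says nothing when the dimension is infinite; instead I would reuse its proof to show outright that $\dim(\rand G)\le 2m-1$, contradicting the hypothesis of infinite dimension.

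The key observation is that the construction in the proof of Theorem~\ref{TheoremGeodTree} only invokes criticality of the chosen cover in its final step, where a point of multiplicity $n+1$ is extracted. Everything before that step takes an arbitrary \emph{finite} open cover $\UF$ of $\rand G$ and, under the standing bound $m$, manufactures an open refinement of $\UF$ whose multiplicity is at most $2m$: Proposition~\ref{TwoStar} produces a finite vertex set $S$ whose infinite components of $T-S$ have limit-point sets $Z_i$, each contained in some member of $\UF$; Proposition~\ref{EpsilonCircles} then yields an $\varepsilon>0$ so that $B_\varepsilon(\eta)$ meets at most $m$ of the $Z_i$; and passing to the open sets $Z''=(Z\cup\bigcup_{\eta\in Z}B_\varepsilon(\eta))\cap U$ gives a refinement in which each boundary point lies in at most $2m$ members (at most $m$ sets $Z$ pass through $\eta$, and at most $m$ further ones have $B_\varepsilon(\eta)$ meeting them).

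To finish I would drop the finiteness hypothesis on $\UF$ using compactness: since $\rand G$ is compact by Theorem~\ref{BoundComp}, an arbitrary open cover has a finite subcover $\UF$, and any refinement of $\UF$ refines the original cover as well. Hence every open cover of $\rand G$ admits a refinement of multiplicity at most $2m$, which is precisely the statement that $\dim(\rand G)\le 2m-1$. This contradicts the assumed infinite dimension, so no such bound $m$ can exist. The step demanding the most care is the second one: I must verify that Propositions~\ref{TwoStar} and~\ref{EpsilonCircles} are genuinely formulated for an arbitrary finite open cover and that the count bounding the number of sets $Z''$ through a point never secretly relied on criticality or on the dimension being finite, so that discarding the final extraction step is legitimate and the bound $2m$ survives unchanged.
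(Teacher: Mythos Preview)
Your proposal is correct and is precisely what the paper's bare \qed\ must mean when unpacked: the \emph{proof} of Theorem~\ref{TheoremGeodTree} (rather than its statement, which presupposes $\dim(\rand G)=n\in\nat$) shows that a uniform bound $m$ on the number of rays forces $\dim(\rand G)\le 2m-1$, and this is the contrapositive of the corollary. Your observation that criticality is invoked only in the final extraction step, that Propositions~\ref{TwoStar} and~\ref{EpsilonCircles} are stated for an arbitrary finite open cover, and that compactness reduces the general case to finite covers, is exactly right; you have simply been more careful than the paper about distinguishing the statement of the theorem from its proof.
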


Let us finally give an example that, in general, the rooted geodesic spanning trees do not show the whole truth about the topological dimension of the hyperbolic boundary.

\begin{Exam}\label{SecondExample}
Let $V_k$ be a set of $2^k$ elements such that the $V_k$ are pairwise disjoint.
Let $G$ be a graph with vertex set $\bigcup_{k\in\nat} V_k$.
Any two vertices of the same $V_k$ are adjacent.
Furthermore, every $x\in V_k$ with $k\neq 0$ has precisely one neighbour in $V_{k-1}$, two neighbours in $V_{k+1}$, and no other neighbours.

This graph is obviously a hyperbolic graph with one end and one boundary point so its hyperbolic boundary has topological dimension~$0$.
Let $T$ be a geodesic spanning tree in $G$ with root $r$.
For every vertex $x$ in $G$ there is a subgraph $H$ of $G$ such that $H$ is isomorphic to $G$ and $x$ is mapped to the unique vertex $o\in V_0$.
If the graph with $r=o$ has infinitely many distinct $r$-$\eta$-paths for the only boundary point $\eta$, then this is also the case for any tree $T$ with arbitrary~$r$.
Thus, we may assume that $r=o$.
For every vertex $y$ there is a unique geodesic from $r$ to~$y$ and all the infinitely many geodesic rays starting at~$r$ must lie in~$T$.
This proves that every geodesic spanning tree of~$G$ has infinitely many ends each of which corresponds to the same boundary point of~$G$.
\end{Exam}

\section{Spanning trees in hyperbolic graphs}\label{SpannTreesSection}\label{1stExampleSection}

In this section we shall prove our main result, Theorem~\ref{SpanningTrees}, and deduce a corollary from that theorem.
To prove Theorem~\ref{SpanningTrees}, we shall prove in particular that the number of distinct rays to the same boundary point is finite and bounded for all hyperbolic boundary points, if the Assouad dimension of a hyperbolic boundary is finite.
Since the Assouad dimension depends on the metric of the boundary (cp.\ Theorem~\ref{MetricTopologyOfBoundary}) and since we may have chosen distinct metrics, it would be good if the existence of an upper bound does not depend on the particular metric we used for the completion of $G$.
Bonk and Schramm \cite[Section~6 and 9]{BonkSchramm_Embeddings} showed that this is indeed the case: If one hyperbolic metric $d_\varepsilon$ on $G$ induces a boundary with finite Assouad dimension, then all hyperbolic metrics have this property and all boundaries are doubling metric spaces.
But although the existence is given for each metric $d_{\varepsilon}$, the Assouad dimension may change with different metrics (compare with \cite[A.5~(9)]{Luukainen}).
Thus it may happen that for different $\varepsilon$ and $\varepsilon'$ the Assouad dimensions $\dim_A(\rand G,d_\varepsilon)$ and $\dim_A(\rand G,d_{\varepsilon'})$ may differ.
It remains open if there is another dimension concept, perhaps the topological dimension~-- recall that $\dim(X)\leq\dim_A(X)$ for all metric spaces by Remark~\ref{compareDimension}~--, that is invariant under changing the metric $d_\varepsilon$ such that the upper bound of distinct rays to the same boundary point depends only on that dimension.

\begin{proof}[Proof of Theorem~\ref{SpanningTrees}]
First, let us give a brief outline of this proof.
After adjusting some variables, we shall recursively construct an infinite rooted subtree $T'$ of~$G$ whose construction usually finishes just in the limit step.
This subtree consists only of rays starting at the root.
There will be some vertices in the graph that do not lie in~$T'$.
We shall add them to $T'$ with appropriate paths so that no new ray is created after connecting all the remaining vertices.
The obtained tree $T$ will fulfill the conclusions of Theorem~\ref{SpanningTrees}.
But before we prove this, we first show two claims about some properties of the construction of the tree $T'$.
Then we are able to show that the properties (i) to (iii) hold for the constructed subtree $T'$ and thus they also hold for the spanning tree $T$ of~$G$, as all components of $T-T'$ are finite.

\bigskip

Let $\varepsilon>0$ such that $\varepsilon':=\exp(\varepsilon\delta)-1<\sqrt{2}-1$, let $d_h=d_\varepsilon$, and let $\varrho=\exp(5\varepsilon\delta)/\varepsilon'$.
Before we recursively construct the subtree $T'$ of~$G$ that contains all rays of the final tree that start at the root, let us give a brief outline of that construction.
We construct $T'$ by choosing in each step $j$ a suitable subset $S_j$ of the hyperbolic boundary points with $S_{j-1}\subseteq S_j$.
The new rays are constructed to the hyperbolic boundary points in $S_j\setminus S_{j-1}$ so that for the corresponding trees $T_{j-1}$ and $T_j$ we have $T_{j-1}\subseteq T_j$.
The new rays are not constructed at the same time but one after another in a special order.
This order depends on an order of the boundary points in $S_j\setminus S_{j-1}$ that guarantees the existence of some fixed $d_j$ such that in the metric $d_h$ the distance between the new hyperbolic boundary point and the hyperbolic boundary point of the previous step we have connected this new hyperbolic boundary point to is at most~$d_j$~-- we shall substantiate the term \emph{connected} later.
In each step of the recursion there is an $\varepsilon_j$ with $\varepsilon_{j-1}>\varepsilon_j$ and with $\frac{\varepsilon_{j-2}}{\varepsilon_{j-1}}=\frac{\varepsilon_{j-1}}{\varepsilon_j}$ such that $\rand G$ is covered by the open balls of radius $\varepsilon_{j}$ and with precisely those boundary points as centers that lie in $S_{j}$.
The tree $T'$ will be the union of all~$T_j$.

\medskip

So let us start with the construction.
Since $\rand G$ has finite Assouad dimension, we may assume by Theorem~\ref{AssouadDoubling} that $\rand G$ is doubling.
As mentioned the property of the hyperbolic boundary of having finite Assouad dimension~-- and hence of being doubling~-- does not depend on the chosen metric $d_\varepsilon$ even though their actual values do.
Let $r\in VG$, and let $N=2^{\dim_2(\rand G)}$.
For the first step of the construction choose a boundary point $\mu^0\in\rand G$.
Let $S_0=\{\mu^0\}=Y_0$, let $\varepsilon_0=\diam(\rand G)$, and let $T_0$ be the graph consisting of a geodesic ray from $r$ to~$\mu^0$. Such a ray exists by Proposition~\ref{RightChoiceOfGeodRays}.

For the step $j$ of the construction let $T_{j-1}$ be the tree constructed in the previous step, let $S_{j-1}$ be the set of boundary points for which $T_{j-1}$ contains a ray converging to that boundary point, and let $\BF_{j-1}$ be the set of all closed $\varepsilon_{j-1}$-balls with centers in $Y_{j-1}$, where $Y_{j-1}\subseteq S_{j-1}$ such that we have $d_h(x,y)\geq\varepsilon_{j-1}$ for all $x,y\in S_{j-1}$ and $d_h(x,y)\geq 8N\varepsilon_{j-1}$ for all $x,y\in Y_{j-1}$.
Furthermore, we assume that $\BF_{j-1}$ is a closed cover of~$\rand G$ and that the tree $T_{j-1}$ has the following two properties.

\begin{itemize}
\item[$(*)$] Every edge in $T_{j-1}$ lies either in the tree $T_0$ or on a geodesic double ray in~$G$ between two boundary points in $S_{j-1}$.
\item[$(**)$] Every ray in $T_{j-1}$ is eventually geodesic.
\end{itemize}

Before we start with the construction of the new rays, we have to determine the new sets $Y_j$ and $S_j$ of hyperbolic boundary points and a closed cover $\BF_j$ of the hyperbolic boundary.

By Lemma \ref{LS2.3} there is a finite closed covering $\BF_{j}$ of $\rand G$ with balls of radius $\frac{\varepsilon_{j-1}}{32}$, with $\frac{\varepsilon_{j-1}}{32}$-multiplicity at most $N^4$ such that the set $Y_j$ of centers of these balls is a superset of~$S_{j-1}$, such that each two elements of $Y_j$ have distance more than $\frac{\varepsilon_{j-1}}{32}$ and such that every element of~$Y_j$ has $(\frac{3\varepsilon_{j-1}}{32})$-multiplicity at most~$N^4$ in~$\BF_j$.
Let
$$\varepsilon_j=\frac{\varepsilon_{j-1}}{64\varrho N^4}$$
and let $S_j$ be a finite subset of $\rand G$ with $S_{j-1}\cup Y_j\subseteq S_j$, with $d_h(\eta,\mu)>\varepsilon_j$ for all $\eta,\mu\in S_j$, and such that $\{\overline{B}_{\varepsilon_j}(s)\mid s\in S_j\}$ is a closed cover of~$\rand G$ with $\frac{\varepsilon_{j-1}}{8}$-multiplicity at most $N^{\log_2(32\varrho N^4)}$.
We obtain this set by applying the proof of Lemma~\ref{LS2.3}.
Indeed, we may consider in that proof the value $r$ to be $\varepsilon_j$ and $Y=Y_j$.
If we consider the balls $\overline{B}_{8\varrho N^4\varepsilon_j}(z)$, then we obtain $|S_j\cap \overline{B}_{8 \varrho N^4\varepsilon_j}(z)|\leq N^{\log_2(32\varrho N^4)}$.
Hence, $\{\overline{B}_{\varepsilon_j}(s)\mid s\in S_j\}$ has $\frac{\varepsilon_{j-1}}{8}$-multiplicity at most $N^{\log_2(32\varrho N^4)}$.

Let $T^0_j=T_{j-1}$.
Let $S_j\setminus S_{j-1}=\{\mu^j_1,\ldots,\mu^j_{\abs{S_j\setminus S_{j-1}}}\}$ with the property that all $\mu^j_i$ with $\varrho \varepsilon_{{j-1}}$-multiplicity $1$ in $\BF_{{j-1}}$ have a smaller index than those that have $(2\varrho \varepsilon_{{j-1}})$-multiplicity at most $2$ in $\BF_{{j-1}}$ but not $\varrho\varepsilon_{j-1}$-multiplicity at most $1$ in~$\BF_{j-1}$ and so on until we have those that have $(N^4\varrho\varepsilon_{{j-1}})$-multiplicity at most $N^4$ in~$\BF_{{j-1}}$ but not $((N^4-1)\varrho\varepsilon_{j-1})$-multiplicity at most $1$ in~$\BF_{j-1}$.
Since $\BF_{j-1}$ is a covering of balls of radius at most $\varrho N^4\varepsilon_{j-1}=\frac{\varepsilon_{j-2}}{32}$ of $\rand G$ of $(\frac{\varepsilon_{j-2}}{32})$-multiplicity at most $N^4$, any point in~$\rand G$ has $\varrho N^4\varepsilon_{j-1}$-multiplicity at most $N^4$ in~$\BF_{j-1}$.
Thus, we have enumerated all of $S_j\setminus S_{j-1}$.

\medskip

Having built the setting for the recursion step $j$ we construct the new rays in this step.
Let us construct the rays to the $\mu^j_i$ one by one in the order they are enumerated.
For every $\mu^j_i$ there is an $\eta\in S_{{j-1}}$ with $d_h(\mu^j_i,\eta)\leq \varepsilon_{{j-1}}$, since $\{\overline{B}_{\varepsilon_{j-1}}(s)\mid s\in S_{j-1}\}$ is a covering of~$\rand G$.
Let $\pi$ be a geodesic double ray from $\mu^j_i$ to $\eta$ such that the new ray uses a subray of the existing ray in $T^{i-1}_j$ to $\eta$.
This is possible due to Proposition \ref{RightChoiceOfGeodRays} and since the rays in $T_j^{i-1}$ are eventually geodesic either by construction in this step or by property $(**)$.
It might happen that $\pi$ intersects with $T^{i-1}_j$ non-trivially apart from the common subray to~$\eta$.
Then we just add a maximal subray of~$\pi$ to~$T^{i-1}_j$ that intersects with $T^{i-1}_j$ only in its endvertex~$x$.
This vertex $x$ has to lie either in~$T_0$ or on a geodesic double ray in~$G$ between two elements of~$S_{j-1}\cup\{\mu^j_1,\ldots,\mu^j_{i-1}\}$ by~$(*)$ and by the construction of $T^{i-1}_j$.
For at least one of the involved hyperbolic boundary points of that (double) ray in~$G$, say $\eta'$, we have $(\mu^j_i,\eta)-5\delta\leq(\mu^j_i,\eta')$ because of Proposition~\ref{geodIn4Delta} and due to the definition of hyperbolic.
So we obtain from Theorem~\ref{BoundComp} the following inequality:
$$\begin{array}{lll}
d_h(\mu^j_i,\eta')&\le&\exp(-\varepsilon(\mu^j_i,\eta'))\\
&\leq&\exp(-\varepsilon(\mu^j_i,\eta)+5\varepsilon\delta)\\
&=&\varrho\varepsilon'\exp(-\varepsilon(\mu^j_i,\eta))\\
&\leq&\varrho d_h(\mu^j_i,\eta)\\
&\leq&\varrho\varepsilon_{j-1}
\end{array}$$
We shall use the fact $d_h(\mu^j_i,\eta')\leq \varrho\varepsilon_{j-1}$ later in the proof.
By adding the above mentioned subray of~$\pi$ to $T^{i-1}_j$ we obtain the tree $T^i_j$.

Let $T_j$ be the union of all $T^i_j$, in other words
$$T_j=T^{\abs{S_j\setminus S_{j-1}}}_j.$$
By construction it is clear that $(*)$ and $(**)$ hold for $T_j$ and that $T_j$ is a tree.
Let $T'=\bigcup_{i\in\nat}T_i$. Since all $T_i$ are trees and $T_{i-1}\subseteq T_i$ for all $i\in\nat$, we conclude that $T'$ is connected and cannot contain a (finite) cycle as this would already lie in some~$T_j^i$, so $T'$ is a tree.

\medskip

Let us return to the settings of the recursion step $j$ to define some terminology, which we shall use in the remainder of the proof.
So we have the hyperbolic boundary points $\mu^j_i,\eta,\eta'$ and the geodesic double ray $\pi$ from $\mu^j_i$ to~$\eta$.
Let us call $\mu^j_i$ {\em connected to} $\eta$ if $\pi$ intersects with $T^{i-1}_{j}$ only on the common subray to $\eta$ and {\em connected to} $\eta'$ else.
If $\mu^j_i$ is connected to $\eta$ then $\mu^j_i$ is {\em eventually connected to} $\eta$.
If $\mu^j_i$ is connected to $\eta'$ and $\eta'\in S_{j-1}$ then $\mu^j_i$ is {\em eventually connected to} $\eta'$, and finally, if $\mu^j_i$ is connected to $\eta'$ but $\eta'\not\in S_{j-1}$ then $\mu^j_i$ is {\em eventually connected to} the same boundary point $\eta'$ is eventually connected to.

\medskip

We have just constructed a subtree $T'$ of~$G$ that does not necessarily contain every vertex of~$G$.
So, in general, it is not a spanning tree of~$G$.
In the next part of the proof, we connect all vertices of $G-T'$ to~$T'$, recursively, to get a new tree $T$, which will be a spanning tree of~$G$.
Let $T'_0:=T'$.
We connect the new vertices without creating new rays as follows.
First we can easily extend the tree by adding all finite components of $G-T'$ to $T'$.
Then we add every vertex with distance $d(r,G-T'_0)$ to $T'$ by a path lying outside $B_{d(r,G-T'_0)}(r)$.
There might be vertices for which there exists no such path.
Then we do not add these.
Let $T_1'$ be the new tree.
If there is a vertex in $G-T_1'$ with distance $d(r,G-T_0')$ that does not lie in any finite component of $G-T_1'$, then there is a path from such a vertex to $T'_1-B_{d(r,G-T'_0)}(r)$ that intersects with $T'_1$ trivially except for its endvertex, because $G$ is locally finite.
This path has a last vertex $x$ with distance $d(r,G-T'_0)$ to~$r$.
But this is a contradiction, since $x$ had to be added with a path to~$T'$.
So all the vertices in $G-T_1'$ with distance $d(r,G-T_0')$ to~$r$ lie in finite components of $G-T_1'$.
For the following step we keep in mind the largest distance $d_1$ from $r$ to a vertex lying in $T_1'-T'_0$.
In the recursion step $i$ we add all finite components of $G-T_i'$.
Then we add for each vertex $x$ with $d(r,x)=d_{i-1}+1$ a path to~$T_i'$ that lies completely outside~$B_{d_1}(r)$.
Let $T'_{i+1}$ be the new tree.
Once again, there might be vertices $x$ with $d(r,x)=d_{i-1}+1$ that cannot be connected to $T_i'$ in such a way.
These will be treated at the beginning of the next step of the recursion, as they lie again in finite components of $G-T_{i+1}'$.

Let $T=\bigcup_{i\in\nat}T_i'$.
Obviously, $T$ is a spanning tree of~$G$.
Furthermore, there is no ray in $T-T'$ as we have not connected any vertex in step $j$ to a vertex outside $T'$ except for those in finite components of $G-T_i'$.
Thus, to prove the properties (i) to (iii) of Theorem~\ref{SpanningTrees} for~$T$, it suffices to prove them for $T'$.

\bigskip

We continue with the next part of the proof by showing two claims about the sets $S_j$ that we used during the construction of~$T'$.

\begin{Claim}\label{1stClaim}
Let $\mu^j_{i_1}$ and $\mu^j_{i_2}$ be elements of $S_j\setminus S_{j-1}$ with $d_h(\mu^j_{i_1},\mu^j_{i_2})\leq \varrho\varepsilon_{j-1}$ such that both do not have $((n-1)\varrho\varepsilon_{j-1})$-multiplicity at most $n-1$ but $(n\varrho\varepsilon_{j-1})$-multiplicity at most $n$ in $\BF_{j-1}$ for some $n\leq N^4$.
Then for any $B\in\BF_{j-1}$ with $d_h(\mu^j_{i_1},B)\leq n\varrho\varepsilon_{j-1}$, we have $d_h(\mu^j_{i_2},B)\leq n\varrho\varepsilon_{j-1}$ and vice versa.
\end{Claim}

\begin{proof}[Proof of Claim~\ref{1stClaim}]
Since the $((n-1)\varrho\varepsilon_{j-1})$-multiplicity of both $\mu^j_{i_1}$ and $\mu^j_{i_2}$ in~$\BF_{j-1}$ must be $n$, every element of~$\BF_{j-1}$ with distance at most $n\varrho\varepsilon_{j-1}$ to $\mu^j_{i_k}$ has distance at most $((n-1)\varrho\varepsilon_{j-1})$ to~$\mu^j_{i_k}$ and thus distance at most $n\varrho\varepsilon_{j-1}$ to $\mu^j_{i_\ell}$ with $k\neq \ell$.
So it counts for the $(n\varrho\varepsilon_{j-1})$-multiplicity of~$\mu_{i_\ell}^j$ in~$\BF_{j-1}$.
\end{proof}

\begin{Claim}\label{2ndClaim}
\begin{enumerate}[(i)]
\item If $\mu^j_{i}$ is connected to $\mu\in S_j$ in~$T^i_j$, then we have
\[d_h(\mu^j_{i},\mu)\leq \varrho\varepsilon_{j-1}.\]
\item If $\mu^j_{i}$ is eventually connected to $\eta\in S_{j-1}$ in~$T_j$, then we have
\[d_h(\mu^j_{i},\eta)\leq \varrho N^4\varepsilon_{j-1}+\diam(\BF_{j-1})\leq 5\varrho N^4\varepsilon_{j-1}.\]
Furthermore, $\eta$ lies in some $B\in\BF_{j-1}$ with $d_h(\mu^j_{i},B)\leq \varrho N^4\varepsilon_{j-1}$.
\end{enumerate}
\end{Claim}

\begin{proof}[Proof of Claim~\ref{2ndClaim}]
The first statement holds immediately, as we mentioned during the construction.
So let $\mu^j_{i}$ be eventually connected to~$\eta$.
If $\mu^j_{i}$ has $\varrho\varepsilon_{j-1}$-multiplicity $1$ in $\BF_{j-1}$, then it can only be connected to a boundary point $\mu$ with $d_h(\mu^j_{i},\mu)\leq\varrho\varepsilon_{j-1}$ by the construction.
Both these boundary points must lie in the same $B\in\BF_{j-1}$ by the $\varrho\varepsilon_{j-1}$-multiplicity of~$\mu_{i}^j$ in~$\BF_{j-1}$ and as $\BF_{j-1}$ covers~$\rand G$.
Thus and by induction, $\mu^j_{i}$ is eventually connected to some boundary point $\eta$ in the same $B\in\BF_{j-1}$ that contains $\mu^j_{i}$ such that
$$d_h(\mu^j_{i},\eta) \leq \diam(\BF_{j-1})\leq 4\varrho N^4\varepsilon_{j-1}.$$
Let us now assume that $\mu^j_{i}$ has $(k\varrho\varepsilon_{j-1})$-multiplicity at most $k$ in $\BF_{j-1}$ but not $((k-1)\varrho\varepsilon_{j-1})$-multiplicity at most $k-1$ in $\BF_{j-1}$.
Originally, we wanted to connect $\mu^j_{i}$ to a boundary point $\mu\in S_{j-1}$ with $d(\mu,\eta)\leq\varepsilon_{j-1}$.
But by our construction $\mu^j_{i}$ is connected to a boundary point $\nu\in S_{j-1}\cup\{\mu^j_1,\ldots,\mu^j_{i-1}\}$ with $d_h(\mu^j_{i},\nu)\leq\varrho\varepsilon_{j-1}$ and thus, $\nu$ is contained in an element $B\in\BF_{j-1}$ which is responsible for the $(k\varrho\varepsilon_{j-1})$-multiplicity of $\mu^j_{i}$ in~$\BF_{j-1}$.
If $\nu\in S_{j-1}$, then nothing remains to show.
So we may assume that $\nu\in S_j\setminus S_{j-1}$.
Let $\eta$ be the boundary point $\nu$ is eventually connected to.
By induction we know that $\eta$ lies in one of the elements of $\BF_{j-1}$, say in $B$, that is responsible for the $(n\varrho\varepsilon_{j-1})$-multiplicity of at most $n$ of~$\nu$ for some $n\leq N^4$.
As $\nu$ has a smaller index than $\mu^j_{i}$, we have $n\leq k$.
Similar to Claim~\ref{1stClaim}, we know that $B$~is responsible for the $(k\varrho\varepsilon_{j-1})$-multiplicity of at most $k$ of~$\mu^j_{i}$.
We conclude
$$d_h(\mu^j_{i},\eta)\leq k\varrho\varepsilon_{j-1}+\diam(\BF_{j-1})\leq k\varrho\varepsilon_{j-1}+4\varrho N^4\varepsilon_{j-1}.$$
As all elements of $S_j\setminus S_{j-1}$ have $\varrho N^4\varepsilon_{j-1}$-multiplicity at most $N^4$ in~$\BF_{j-1}$, Claim~\ref{2ndClaim} follows.
\end{proof}

The last part of the proof is to show the properties (i) to (iii) for~$T'$.
For a closed ball $B_k\in\BF_k$, let $B'_k$ denote $B_k$ together with all other (at most $N^4$) closed balls in $\BF_k$ with distance at most $\varrho N^4\varepsilon_k$ to~$B_k$.

Since $(**)$ holds in each recursion step of the construction of~$T'$, all we have to prove for (i) is that every ray we created by the construction of infinitely many rays converges to some boundary point.
Let us assume that $\pi$ is a ray in~$T'$ with the property that there exists infinitely many finite subpaths $(P_i)_{i\in\nat}$ of~$\pi$ such that each $P_i$ was used for the construction of another ray~$R_i$.
Let $\eta_i$ be the limit point of~$R_i$.
Since $\widehat{G}$ is compact, $\pi$ has at least one accumulation point $\eta$ in $\rand G$.
Thus, we have to prove that there exists no second accumulation point.
For every step~$k$, let $B_k$ be one of the closed balls in~$\BF_k$ that contain~$\eta_i$ if $\eta_i$ is the last element in the enumeration of $S_k\setminus S_{k-1}$ and let $B_k=B_{k-1}$ if no $\eta_i$ lies in $S_k\setminus S_{k-1}$ (with $B_0=\rand G$).
Any second boundary point must lie~-- like $\eta$ does~-- in $\bigcap_{k\in\nat}B'_k$ by Claim~\ref{2ndClaim}.
Since $\bigcap_{k\in\nat}B_k'$ is a set with at most one element, $\pi$ has precisely one accumulation point.

\smallskip

For the proof of~(ii), let $\eta$ be a boundary point of~$G$.
Then for each $k$, there is at least one closed ball $B_k$ in the step $k$ of the construction of~$T'$ with $\eta\in B_k$.
Thus, there is a boundary point $\eta_k\in S_k\cap B_k$ with $d_h(\eta_k,\eta)\leq\varepsilon_k$ and we constructed a ray to~$\eta_k$.
Since $G$ is locally finite, there is an infinite path $\pi$ such that each edge of that path is contained in infinitely many of the rays to the boundary point~$\eta_k$.
By the Gromov-product, Claim~\ref{2ndClaim}, and the choice of the rays to the~$\eta_k$, the ray~$\pi$ must have $\eta$ as an accumulation point.
Due to~(i), $\pi$ converges to~$\eta$.

\smallskip

To any closed ball $B\in\BF_k$ in step $k$ there are at most $N^4$ closed balls in the step $k-1$ to which boundary points of~$B$ are eventually connected to due to the choice of~$\BF_k$ and Claim~\ref{2ndClaim}.
Additionally, each ball contains at most $N^{\log_2(32\varrho N^4)}$ many elements of~$S_k$.
Thus, the number of rays to one boundary point is bounded by a function depending only on the doubling property of~$\rand G$.
Since for given $\varepsilon$, the doubling property depends only on the Assouad dimension, this proves the only remaining part (iii) of Theorem \ref{SpanningTrees}.
\end{proof}

A graph $G$ has {\em bounded growth at some scale} if there are constants $r,R$ with $R>r>0$ and $N\in\nat$ such that every ball of radius less than $R$ can be covered by $N$ balls of radius less than $r$.

Bonk and Schramm proved the following theorem about hyperbolic graphs with bounded growth at some scale.

\begin{Tm}\label{BS9.2}{\rm \cite[Theorem 9.2]{BonkSchramm_Embeddings}}
Let $G$ be a hyperbolic graph with bounded growth at some scale.
Then the hyperbolic boundary of~$G$ is doubling and has finite Assouad dimension.\qed
\end{Tm}

Therefore, we obtain the following corollary.

\begin{Cor}
Let $G$ be a locally finite hyperbolic graph with bounded growth at some scale. Then there exists an $n\in\nat$ and a rooted spanning tree $T$ of~$G$ with the following properties:
\begin{enumerate}[(i)]
\item Every ray in $T$ converges to a point in the boundary of~$G$;
\item for every boundary point $\eta$ of~$G$ there is a ray in~$T$ converging to~$\eta$;
\item for every boundary point $\eta$ of~$G$ there are at most $n$ distinct rays in~$T$ starting at the root of~$T$ and converging to~$\eta$.\qed
\end{enumerate}
\end{Cor}

Since all graphs with bounded degree have bounded growth at some scale, all almost transitive graphs and in particular all Cayley graphs fulfill the assumptions of Theorem~\ref{SpanningTrees}.

\smallskip

As mentioned in the introduction, arbitrary locally finite hyperbolic graphs do not have spanning trees which are faithful to hyperbolic boundary points instead of ends.
Let us discuss an explicit example:

\begin{Exam}\label{ex_introEx}
Let $G$ be the graph of Figure~\ref{MG}.
\begin{figure}[h]
\begin{center}
\includegraphics{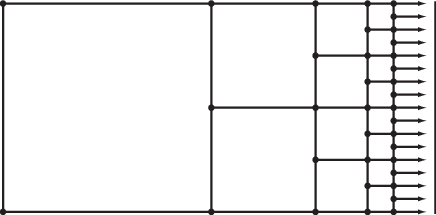}
\caption[Figure 1]{A hyperbolic graph with its hyperbolic boundary}\label{MG}
\end{center}\end{figure}

Its hyperbolic boundary is homeomorphic to the real unit interval.
Now suppose there is a spanning tree $T$ of~$G$ with precisely one ray from the root to each boundary point of~$G$.
Then there is a vertex $x$ that separates $T$ into at least two infinite components, call them $C_1,\ldots,C_n$.
For each~$i$ let $Z_i\subseteq \rand G$ be the set of boundary points to which there is a ray in~$C_i$.
As $G$ is locally finite, it is not hard to see that the sets~$Z_i$ are closed in~$\rand G$.
So they have to intersect, since $\rand G$ is connected and $\bigcup Z_i=\rand G$.
\end{Exam}

\providecommand{\bysame}{\leavevmode\hbox to3em{\hrulefill}\thinspace}
\providecommand{\MR}{\relax\ifhmode\unskip\space\fi MR }
\providecommand{\MRhref}[2]{%
  \href{http://www.ams.org/mathscinet-getitem?mr=#1}{#2}
}
\providecommand{\href}[2]{#2}

\end{document}